\newtheorem{thm}{Theorem}[section]
\newtheorem{lem}[thm]{Lemma}
\newtheorem{conj}[thm]{Conjecture}
\newtheorem{thm-con}[thm]{Theorem-Conjecture}
\numberwithin{equation}{section}
\theoremstyle{definition}
\newcommand{\f}{\Bbb F}
\newcommand{\fin}{f_{\text{\rm inv}}}
\newcommand{\SF}{\text{\rm SF}_n}
\begin{document}

\title[a conjecture about the binary multiplicative inverse function]{on a conjecture about the sum-freedom of the binary multiplicative inverse function}

\author[Xiang-dong Hou]{Xiang-dong Hou}
\address{Department of Mathematics and Statistics,
University of South Florida, Tampa, FL 33620}
\email{xhou@usf.edu}

\author[S. Zhao]{Shujun Zhao}
\address{Department of Mathematics and Statistics,
University of South Florida, Tampa, FL 33620}
\email{shujunz@usf.edu}

\keywords{APN function, Carlet's conjecture, finite field, Hasse-Weil bound, sum-free function}

\subjclass[2020]{11G20, 11T06, 11T71, 94D10}

\begin{abstract}
A recent conjecture by C. Carlet on the sum-freedom of the binary multiplicative inverse function can be stated as follows: For each pair of positive integers $(n,k)$ with $3\le k\le n-3$, there is a $k$-dimensional $\Bbb F_2$-subspace $E$ of $\Bbb F_{2^n}$ such that $\sum_{0\ne\in E}1/u=0$. We confirm this conjecture when $n$ is not a prime.
\end{abstract}

\maketitle

\section{Introduction}

While studying certain mathematical questions with potential applications in cryptography, C. Carlet introduced the notion of sum-free functions \cite{Carlet-DCC-2024,Carlet-JC-2025}. A function $f:\f_{2^n}\to \f_{2^n}$ is said to be {\em $k$th order sum-free} if $\sum_{x\in A}f(x)\ne 0$ for every $k$-dimensional affine subspace $A$ of $\f_{2^n}$. (Throughout this paper, unless specified otherwise, subspaces and affine subspaces of $\f_{2^n}$ are $\f_2$-subspaces and $\f_2$-affine subspaces.) The $2$nd order sum-free functions are precisely {\em almost perfect nonlinear} (APN) functions, which were introduced in \cite{Nyberg-LNCS-1992} and have since been extensively studied for their immunity to differential cryptanalysis in block ciphers; see \cite{Carlet-2021} and the references therein. The multiplicative inverse function $\fin:\f_{2^n}\to\f_{2^n}$, defined by $\fin(x)=1/x$ for $x\in\f_{2^n}^*$ and $\fin(0)=0$, is known to be APN for odd $n$ \cite{Nyberg-LNCS-1994}, and this function has been used in the $S$-boxes of the Advanced Encryption Standard (AES) \cite{Daemen-Rijmen-1999}. Carlet \cite{Carlet-DCC-2024} systematically studied the values of $k$ for which $\fin$ is $k$th order sum-free, and he left open the following question which is referred to as Carlet's Conjecture.

\begin{conj}\label{C1.1}
For $3\le k\le n-3$, $\fin$ is not $k$th order sum-free.
\end{conj}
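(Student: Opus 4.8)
Throughout, it suffices to produce, for each $k$ with $3\le k\le n-3$, a $k$-dimensional $\mathbb F_2$-subspace $E\subseteq\mathbb F_{2^n}$ with $s(E):=\sum_{0\ne u\in E}u^{-1}=0$; since a linear subspace is an affine subspace and $\fin(0)=0$, such an $E$ witnesses that $\fin$ is not $k$th order sum-free. The plan is to split according to $e:=\gcd(n,k)$, the compositeness of $n$ entering exactly where structure is needed.

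\emph{The subfield case $e\ge 2$.} Take $E$ to be any $\mathbb F_{2^e}$-subspace of $\mathbb F_{2^n}$ of $\mathbb F_{2^e}$-dimension $k/e$ (it exists since $k/e\le n/e$, and has $\mathbb F_2$-dimension $k$). Choosing $c\in\mathbb F_{2^e}\setminus\{0,1\}$, the bijection $u\mapsto c^{-1}u$ of $E\setminus\{0\}$ gives $s(E)=c\,s(E)$, hence $s(E)=0$. This already disposes of every $k$ divisible by a prime factor of $n$ — in particular of \emph{all} admissible $k$ when $n$ is even. For $n$ prime one always has $e=1$ and no proper subfield to use, which is the point at which this argument (and the reduction below) breaks down.

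\emph{The coprime case $e=1$: peeling off a subfield.} For an internal direct sum $E=V\oplus W$ of $\mathbb F_2$-subspaces one has, writing $L_V(x)=\prod_{v\in V}(x-v)$ (whose formal derivative is the constant $b_0(V):=\prod_{0\ne v\in V}v$),
\[
s(E)=s(V)+b_0(V)\sum_{0\ne w'\in L_V(W)}\frac1{w'},
\]
because $\sum_{v\in V}\frac1{w+v}=\frac{L_V'(w)}{L_V(w)}=\frac{b_0(V)}{L_V(w)}$ for $w\notin V$ and $L_V$ is injective on $W$. Fix a prime $d\mid n$ and take $V$ an $\mathbb F_{2^d}$-subspace of $\mathbb F_2$-dimension $k-r$, with $r\ge 2$ as small as possible; since $d\nmid k$ here one may take $r=(k\bmod d)$ when that is $\ge 2$, otherwise $r=(k\bmod d)+d$ or $r=k$, so $r\le d+1$ in all cases. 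By the subfield case $s(V)=0$, so the problem reduces to: \emph{find an $r$-dimensional subspace $W'$ of the prescribed subspace $\operatorname{Im}L_V$ with $s(W')=0$} (any such $W'$ lifts to a legitimate $W$: choose preimages of a basis). With $V=\mathbb F_{2^d}$ one has $\operatorname{Im}L_V=\ker\operatorname{Tr}_{\mathbb F_{2^n}/\mathbb F_{2^d}}$, which contains large $\mathbb F_{2^d}$-subspaces and, when $n$ has several prime factors, further subfields; iterating the subfield argument inside $\operatorname{Im}L_V$ removes more possibilities, leaving a short list of small residual instances.

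\emph{Residual instances via Hasse--Weil.} The smallest leftover is $r=2$, handled for even $n$ by taking $W'$ an $\mathbb F_4$-line; then $r=3$, and for certain odd composite $n$ a few cases with $r$ somewhat larger but still bounded in terms of the least prime factor of $n$. For these I would parametrize $W'$ by tuples of vectors of $\operatorname{Im}L_V$ and clear denominators in $s(W')=0$, obtaining an affine variety over $\mathbb F_{2^n}$ — a plane curve once $r=3$ and two of the vectors are fixed. One shows this variety has an absolutely irreducible component, so the Hasse--Weil bound (Lang--Weil in the higher-dimensional cases) supplies the expected main term in the point count, forcing, for all sufficiently large $n$, a solution whose chosen vectors are $\mathbb F_2$-independent; the finitely many remaining $n$ are checked directly. \emph{The main obstacle is precisely this last step}: proving absolute irreducibility and bounding the genus of the curve $\sum_{0\ne w'\in W'}1/w'=0$, while simultaneously guaranteeing that the points it yields lie in $\operatorname{Im}L_V$ and span a full $r$-dimensional space. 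Everything preceding it is formal bookkeeping, and it is the compositeness of $n$ that keeps $r$ — hence the variety to be counted — small enough for the Weil estimates to be decisive.
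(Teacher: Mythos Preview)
Your overall architecture --- exploit a proper subfield $\mathbb F_{2^d}\subset\mathbb F_{2^n}$ to reduce the target dimension modulo $d$, then close out the bounded residual cases by a Hasse--Weil count --- is exactly the paper's strategy, and your decomposition formula $s(V\oplus W)=s(V)+b_0(V)\,s(L_V(W))$ is essentially what underlies Theorem~\ref{T2.3}. But the order in which you assemble the pieces creates a genuine gap at the very step you flag as ``the main obstacle.''

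Once you fix the large $\mathbb F_{2^d}$-subspace $V$ first, you must find a small zero-sum $W'$ \emph{inside the proper $\mathbb F_2$-subspace} $\operatorname{Im}L_V\subsetneq\mathbb F_{2^n}$. The Hasse--Weil bound counts $\mathbb F_{2^n}$-rational points of a curve; it does not count points constrained to lie in an $\mathbb F_2$-subspace of $\mathbb F_{2^n}$. The only way to impose that constraint algebraically is to substitute $w_i'=L_V(x_i)$ and work with $F_r(L_V(X_1),\dots,L_V(X_r))$, but then (i) the degree jumps from $\sim 2^r$ to $\sim 2^{k}$, so the Weil error term is no longer controlled by the small parameter $d$ alone, and (ii) the resulting polynomial is certainly not absolutely irreducible, since $L_V$ has kernel $V$ of dimension $k-r$ and the variety is invariant under translation by $V^r$. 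Your sentence ``parametrize $W'$ by tuples of vectors of $\operatorname{Im}L_V$ \dots obtaining an affine variety over $\mathbb F_{2^n}$'' elides precisely this difficulty. Likewise, ``iterating the subfield argument inside $\operatorname{Im}L_V$'' is not available: $\operatorname{Im}L_V$ is an $\mathbb F_{2^d}$-subspace but not a subfield, so the scaling trick $u\mapsto c^{-1}u$ need not stay inside it.

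The paper reverses the order and thereby avoids the constraint entirely. It first runs Hasse--Weil in the full $\mathbb F_{2^n}^2$ on the polynomial $F_{2,l'}(X_1,X_2)=F_{l'+2}(X_1,X_2,u_1,\dots,u_{l'})$, with $u_1,\dots,u_{l'-1}$ chosen in $\mathbb F_{2^l}$; Lemma~\ref{L3.1} shows via Eisenstein that $F_{2,l'}$ is absolutely irreducible of degree $2^{l'+2}-2^{l'}\le 3\cdot 2^{l}$, so the Weil bound bites as soon as $n\ge 4l+7$ (Theorem~\ref{T3.2}). The resulting $r$-dimensional zero-sum space has $\mathbb F_{2^l}$-span at most $4$, and \emph{then} Theorem~\ref{T2.3} supplies the $\mathbb F_{2^l}$-subspace extensions for free. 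In short: build the small seed first with controlled span, then grow it --- not the other way around.
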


We know that $\fin$ is $1$st order sum-free (trivially), and it is $2$nd order sum-free if and only if $n$ is odd \cite{Nyberg-LNCS-1994}. Moreover, for $1\le k\le n-1$, $\fin$ is $k$th order sum-free if and only if it is $(n-k)$th order sum-free \cite{Carlet-DCC-2024}. Therefore, a positive solution to Conjecture~\ref{C1.1} would provide a complete determination of all values of $1\le k\le n-1$ for which $\fin$ is $k$th order sum-free. 

It is known that if $A$ is an affine subspace of $\f_{2^n}$ not containing $0$, then $\sum_{x\in A}1/x\ne 0$ \cite[Theorem~1]{Carlet-JC-2025}. Therefore, Conjecture~\ref{C1.1} is equivalent to the claim that for each pair $(n,k)$ with $3\le k\le n-3$, there is a $k$-dimensional subspace $E$ of $\f_{2^n}$ such that
\begin{equation}\label{1.1}
\sum_{0\ne u\in E}\frac 1u=0.
\end{equation}
We call such a subspace $E$ a {\em zero-sum subspace} of $\f_{2^n}$. A subspace $E$ with $\sum_{0\ne u\in E}1/u\ne 0$ is called a {\em non zero-sum subspace}. There are mainly two approaches to zero-sum subspaces. The first one is algebraic and combinatorial: one uses various secondary constructions to produce zero-sum subspaces with desired dimensions. The other approach is algebro-geometric: one uses the Land-Weil bound on the number of zeros of absolutely irreducible polynomials over finite fields to prove the existence of zero-sum subspaces. Using these methods, Conjecture~\ref{C1.1} has been confirmed under various conditions; see Section 2. However, the conjecture itself remains open. The main result of the present paper is the following

\begin{thm}\label{T1.2}
Conjecture~\ref{C1.1} is true when $n$ is not a prime.
\end{thm}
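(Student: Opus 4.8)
The plan is to reduce, via the equivalence recorded in the excerpt and the duality $k\leftrightarrow n-k$, to exhibiting for every $k$ with $3\le k\le n/2$ a $k$-dimensional zero-sum subspace of $\f_{2^n}$, and then to split according to $\gcd(k,n)$. When $\gcd(k,n)=d>1$, the \emph{subfield construction} settles it: since $d\ge2$ we have $\sum_{c\in\f_{2^d}^*}1/c=\sum_{c\in\f_{2^d}^*}c=0$, so partitioning the nonzero elements of any $\f_{2^d}$-subspace $E$ into the lines $\f_{2^d}u$ gives $\sum_{0\ne u\in E}1/u=0$; taking $E$ of $\f_{2^d}$-dimension $k/d$ produces a zero-sum subspace of $\f_2$-dimension $k$. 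Thus the whole problem concentrates on $k$ coprime to $n$.

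For such $k$ the main tool is a \emph{gluing lemma}. For a subspace $U$ put $L_U(X)=\prod_{u\in U}(X+u)$, an additive polynomial whose formal derivative is the nonzero constant $c_U=\prod_{0\ne u\in U}u$, so that $\sum_{u\in U}1/(X+u)=c_U/L_U(X)$ off $U$. If $U$ is itself a zero-sum subspace and $W$ is any subspace with $U\cap W=0$, then splitting the sum over $U\oplus W$ by the $W$-coordinate (the $w=0$ part vanishes since $U$ is zero-sum) gives $\sum_{0\ne x\in U\oplus W}1/x=c_U\sum_{0\ne v\in L_U(W)}1/v$, where $L_U(W)$ has the same dimension as $W$ because $\ker L_U=U$. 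Hence \emph{$U\oplus W$ is zero-sum exactly when $L_U(W)$ is}, and $L_U(W)$ is constrained to lie in $\operatorname{Im}L_U$, which for $U=\f_{2^d}$ equals $\ker\operatorname{Tr}_{\f_{2^n}/\f_{2^d}}$. Three consequences follow. (i) Cyclic version: if $U$ is an $\f_{2^d}$-subspace and $\alpha^t\notin U$, then $L_U$ is $\f_{2^d}$-linear, so $L_U(\alpha^tV)$ is a scalar multiple of $V$ and $\bigoplus_{j<t}\alpha^j\f_{2^d}\oplus\alpha^tV$ is zero-sum iff $V\subseteq\f_{2^d}$ is; this realizes every dimension $td+r$ with $0\le t<n/d$ and $r$ any zero-sum dimension of $\f_{2^d}$. (ii) Extension lemma: if $E_0$ is a zero-sum subspace with $\dim E_0<n/d$, then since $d\dim E_0<n$ the linear conditions ``$\operatorname{Tr}_{\f_{2^n}/\f_{2^d}}(zE_0)=0$'' on $z\in\f_{2^n}$ cannot be independent, so some $z\ne0$ has $zE_0\subseteq\ker\operatorname{Tr}_{\f_{2^n}/\f_{2^d}}$; feeding a preimage of $zE_0$ into the lemma extends $E_0$ to a zero-sum subspace of dimension $\dim E_0+d$. (iii) The lemma also allows small zero-sum subspaces to be inserted into $\operatorname{Im}L_U$, which is useful because that image is again an $\f_{2^d}$-subspace and the relevant existence count can be run over $\f_{2^d}$.

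These are assembled by induction on $n$ among composite integers. If $n$ has a composite proper divisor $m$, the inductive hypothesis gives $[3,m-3]\subseteq Z(\f_{2^m})$ (where $Z(\cdot)$ is the set of zero-sum dimensions), and (i) with $d=m$ lifts this, covering all of $[3,n-3]$ except for some dimensions within $2$ of a multiple of $m$; these are patched by (ii) (extension by the smallest prime divisor of $n$), using again the block $[3,m-3]$. If $n$ has no composite proper divisor --- i.e. $n=pq$ or $n=p^2$ --- the induction bottoms out, and one instead combines the subfield construction (multiples of $p$ and of $q$), the cyclic construction (i) and the extension lemma (ii) with $U$ running over $\f_{2^p}$- and $\f_{2^q}$-subspaces, the duality $k\leftrightarrow n-k$, and the cases already established in Section~2 --- in particular the algebro-geometric ones, which through the Lang--Weil/Hasse--Weil bound supply zero-sum subspaces of every sufficiently small (hence, dually, every sufficiently large) dimension once $n$ is large, the remaining small $n$ being finished by the explicit verifications in the literature.

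The step I expect to be the crux is precisely this last one: verifying that for $n=pq$ with $p,q$ odd the constructions above leave no dimension in $[3,n-3]$ uncovered. The subfield and cyclic constructions naturally produce dimensions $k$ whose residue modulo $p$ (or modulo $q$) lies in a controlled range, and the extension lemma can only move a dimension lying below $n/p$; the genuinely delicate cases are therefore the ``middle'' dimensions $k\approx n/2$ whose residues modulo both $p$ and $q$ sit near $p/2$ and $q/2$. Dealing with these seems to require running the gluing lemma with $U$ an $\f_{2^p}$- (resp. $\f_{2^q}$-) subspace of dimension close to $k$, so that only a \emph{small} zero-sum subspace must be located inside $\operatorname{Im}L_U$, and then invoking the algebro-geometric existence statement inside that subspace; making the bookkeeping of residues and sizes match along the whole interval $[3,n-3]$, and disposing cleanly of the boundary dimensions $k$ near $3$ and near $n-3$, is where the argument is most technical.
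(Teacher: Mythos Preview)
Your gluing lemma is correct and, together with its consequences (i)--(ii), it essentially reproduces Theorem~2.3; the subfield construction is also fine.  But the proposal is not a proof: the step you yourself flag as the crux---the base case $n=pq$ or $n=p^{2}$ with $p,q\ge 7$---is precisely where the paper's new work lies, and your outline does not close it.

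The issue is the control on the $\f_{2^{l}}$-span.  To run Theorem~2.3 (equivalently, your gluing lemma) far enough to reach $(n-1)/2$, one needs, for each residue $r\in\{3,\dots,l+2\}$ with $l$ the smallest prime factor of $n$, an $r$-dimensional zero-sum subspace $F$ whose $\f_{2^{l}}$-span has \emph{bounded} $\f_{2^{l}}$-dimension $s$: the range produced is $\{tl+r:0\le t\le n/l-s\}$, and one needs $(n/l-s)l+r\ge (n-1)/2$.  The algebro-geometric input you invoke from the literature (absolute irreducibility of $F_{k}$ from \cite{Carlet-Hou}) does yield an $r$-dimensional zero-sum subspace once $n\gtrsim 4r$, but with \emph{no} control on its $\f_{2^{l}}$-span; taking the worst case $s=r$ one recovers exactly the condition $(l-1)(l+2)\le (n+1)/2$ of \cite[Theorem~3.6]{EHRZ}, which fails for $n=49,77,91,121,\dots$.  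Your suggestion of ``locating a small zero-sum subspace inside $\operatorname{Im}L_{U}$'' faces the same obstruction: the known irreducibility/point-count is for $F_{k}$ over all of $\f_{2^{n}}^{k}$, not restricted to a prescribed $\f_{2^{l}}$-subspace, and your item (iii) does not supply such a restricted count.  The appeal to ``explicit verifications in the literature'' for small $n$ is also unsupported; $n=49$ is not covered by any result cited in Section~2.

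What the paper adds is exactly this missing control.  Lemma~3.1 shows that the \emph{partially specialized} polynomial $F_{2,l'}(X_{1},X_{2})=F_{l'+2}(X_{1},X_{2},u_{1},\dots,u_{l'})$ is still absolutely irreducible whenever $u_{1},\dots,u_{l'}$ span a non zero-sum subspace; Hasse--Weil then gives Theorem~3.2, which lets one \emph{prescribe} $u_{1},\dots,u_{l'-1}\in\f_{2^{l}}$ and still extend by two to a zero-sum $F$ of dimension $r=l'+2$, as soon as $n\ge 4l'+7$.  This forces $s=\dim_{\f_{2^{l}}}\f_{2^{l}}F\le 4$, and now Theorem~2.3 reaches $(n-1)/2$ for every composite $n$ (since $l\le\sqrt{n}$ gives $n\ge 4l+7$ once $l\ge 7$).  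The constrained irreducibility of Lemma~3.1 is the idea your outline is missing.
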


In Section~2, we review some previous results that are essential for the present paper. The proof of Theorem~\ref{T1.2} is given in Section~3; the proof relies on several previous results and a critical refinement of one of them. Theorem~\ref{T1.2} compares to an early weaker version in \cite[Theorem~3.6]{EHRZ} which states that Conjecture~\ref{C1.1} is true if the minimal prime divisor $l$ of $n$ satisfies $(l-1)(l+2)\le (n+1)/2$.

\section{Previous Results}

Sum-free functions were systematically investigated in several recent papers \cite{Carlet-DCC-2024, Carlet-JC-2025, Carlet-Hou, EHRZ, Hou-Zhao}, and much progress has been made towards resolving Conjecture~\ref{C1.1}. Here, we only gather the existing results that will be used in the proof of our main theorem.

For $n\ge 2$, let
\begin{equation}\label{2.1} 
\SF=\{1\le k\le n-1:\fin\ \text{is $k$th order sum-free on $\f_{2^n}$}\}
\end{equation}
and 
\begin{align}\label{2.2}
\mathcal K_n\,&=\{1,\dots,n-1\}\setminus\SF\\
&=\{1\le k\le n-1:\text{$\f_{2^n}$ contains a $k$-dimensional zero-sum subspace}\}.\nonumber
\end{align}
Therefore, Conjecture~\ref{C1.1} simply claims that $\{3,\dots,n-3\}\subset\mathcal K_n$ for $n\ge 6$.

\begin{thm}\label{T2.1} {\rm (\cite[Theorem~3]{Carlet-DCC-2024})} 
$k\in\mathcal K_n$ if and only if $n-k\in\mathcal K_n$.
\end{thm}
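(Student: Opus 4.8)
The plan is to reduce the zero-sum condition to the vanishing of a single coefficient of the subspace polynomial, and then to attach to each $k$-dimensional subspace an explicit $(n-k)$-dimensional companion subspace whose corresponding coefficient vanishes exactly when the original one does. First I would encode a $k$-dimensional subspace $E$ by its subspace polynomial $L_E(x)=\prod_{e\in E}(x-e)=\sum_{i=0}^{k}c_i x^{2^i}$, a monic linearized polynomial with $c_k=1$ and constant-of-the-odd-part $c_0=\prod_{0\ne e\in E}e\ne 0$. Since $L_E(x)/x=\prod_{0\ne e\in E}(x-e)$ has the nonzero elements of $E$ as its roots, a Vieta computation (the sum of reciprocals of the roots equals the negative of the ratio of the coefficient of $x$ to the constant term) gives $\sum_{0\ne u\in E}1/u=c_1/c_0$ in $\f_{2^n}$. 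Hence $E$ is a zero-sum subspace if and only if $c_1=0$, i.e.\ the coefficient of $x^2$ in $L_E$ vanishes.

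Next I would build the companion subspace. Because $E\subseteq\f_{2^n}$, all roots of $L_E$ are roots of $x^{2^n}-x$, so by Ore's theory of linearized polynomials $L_E$ right-divides $x^{2^n}-x$ under composition: there is a unique monic linearized polynomial $L_E^{\mathrm c}$ of $2$-degree $n-k$ with $L_E^{\mathrm c}\!\bigl(L_E(x)\bigr)=x^{2^n}-x$. Setting $F=\ker L_E^{\mathrm c}$, I would verify $F=\mathrm{im}\,L_E$ (the containment $\mathrm{im}\,L_E\subseteq\ker L_E^{\mathrm c}$ is immediate, and the two dimensions are both $n-k$ by separability), so $F$ is an $(n-k)$-dimensional subspace whose subspace polynomial is exactly $L_E^{\mathrm c}=\sum_{j=0}^{n-k}b_j x^{2^j}$ with $b_{n-k}=1$.

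The crux is then a short coefficient comparison. Expanding $L_E^{\mathrm c}\!\bigl(L_E(x)\bigr)=\sum_{j}b_j\sum_i c_i^{2^j}x^{2^{i+j}}$ and matching the coefficients of $x^{2^0}$ and $x^{2^1}$ against $x^{2^n}-x$ yields $b_0 c_0=1$ and $b_0 c_1+b_1 c_0^{2}=0$, so that $b_1=c_1/c_0^{3}$. Therefore $b_1=0\iff c_1=0$, which by the first paragraph means $F$ is a zero-sum subspace if and only if $E$ is. Consequently, if $k\in\mathcal K_n$ I pick a $k$-dimensional zero-sum $E$ and obtain the $(n-k)$-dimensional zero-sum subspace $F$, giving $n-k\in\mathcal K_n$; the converse is automatic by applying the same construction with $k$ replaced by $n-k$.

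The step I expect to require the most care is the construction and identification of the companion subspace $F$: one must invoke that $L_E$ right-divides $x^{2^n}-x$ in the composition ring, that $\ker L_E^{\mathrm c}$ has dimension equal to the $2$-degree $n-k$ (separability of divisors of $x^{2^n}-x$), and that $L_E^{\mathrm c}$ is genuinely the subspace polynomial of $\mathrm{im}\,L_E$. The remaining steps are routine once one keeps track of the fact that $c_0\ne 0$, which is what makes both the reciprocal-sum formula and the relation $b_1=c_1/c_0^{3}$ meaningful.
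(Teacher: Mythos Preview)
Your argument is correct, but note that the paper does not supply its own proof of this statement: Theorem~\ref{T2.1} is quoted without proof as a previous result from \cite{Carlet-DCC-2024}, so there is no in-paper argument to compare against.

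Your route via subspace polynomials is self-contained and sound. The identification $\sum_{0\ne u\in E}1/u=c_1/c_0$ is the Vieta relation for $L_E(x)/x=\prod_{0\ne e\in E}(x-e)$; the symbolic factorization $x^{2^n}-x=L_E^{\mathrm c}\circ L_E$ is the standard fact that ordinary divisibility of linearized polynomials is equivalent to right symbolic divisibility (see e.g.\ \cite{Lidl-Niederreiter-FF-1997}); and matching the coefficients of $x$ and $x^2$ in the composition gives $b_0c_0=1$ and $b_0c_1+b_1c_0^{2}=0$, whence $b_1=c_1/c_0^{3}$, so the zero-sum property transfers between $E$ and $F$. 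The point you flagged as delicate---that $L_E^{\mathrm c}$ is genuinely the subspace polynomial of $F=\mathrm{im}\,L_E=\ker L_E^{\mathrm c}$---is easily settled: comparing leading terms in $L_E^{\mathrm c}\circ L_E=x^{2^n}-x$ forces $b_{n-k}=1$, and comparing derivatives forces $b_0\ne 0$, so $L_E^{\mathrm c}$ is monic and separable of degree $2^{n-k}$ with kernel exactly $F$.
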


\begin{thm}\label{T2.2} {\rm (\cite[Corollary~5.2]{Carlet-Hou}, \cite[Corollary~3.7, Theorem~3.13]{EHRZ})}
Conjecture~\ref{C1.1} is true if $n$ is divisible by $2$ or $3$ or $5$.
\end{thm}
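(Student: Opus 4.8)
The plan is to construct, for each $k$ with $3\le k\le n-3$, a $k$-dimensional zero-sum subspace of $\f_{2^n}$, treating the three divisibility cases $p\mid n$ with $p\in\{2,3,5\}$ in parallel. By Theorem~\ref{T2.1} it suffices to realize each $k$ with $3\le k\le n/2$, the remaining values following from the symmetry $k\leftrightarrow n-k$. Throughout I write $\sigma(E)=\sum_{0\ne u\in E}1/u$, so that $E$ is zero-sum exactly when $\sigma(E)=0$.

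The first ingredient is a clean building block. If $m\ge 2$ and $m\mid n$, then every $\f_{2^m}$-subspace $V$ of $\f_{2^n}$ is zero-sum: the group $\f_{2^m}^*$ acts freely on $V\setminus\{0\}$ by scalar multiplication, and over each orbit $\f_{2^m}^*w$ one has $\sum_{t\in\f_{2^m}^*}(tw)^{-1}=w^{-1}\sum_{t\in\f_{2^m}^*}t=0$, since $\sum_{t\in\f_{2^m}^*}t=0$ for $m\ge 2$. Summing over orbits gives $\sigma(V)=0$. As the $\f_2$-dimensions of $\f_{2^m}$-subspaces are precisely the multiples of $m$, this shows that every multiple of $p$ in $\{1,\dots,n-1\}$ lies in $\mathcal K_n$; in particular the subfields $\f_4,\f_8,\f_{32}$ now present in $\f_{2^n}$ supply zero-sum subspaces in the relevant residue class.

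The second ingredient is a gluing lemma that lets me add dimensions. For a subspace $D$ set $L_D(X)=\prod_{v\in D}(X+v)$; this is an $\f_2$-linearized polynomial with kernel $D$ and linear coefficient $a_0(D)=\prod_{0\ne v\in D}v\ne 0$, so that $\sum_{v\in D}(X+v)^{-1}=a_0(D)/L_D(X)$. A direct computation then shows that for subspaces $E,D$ with $E\cap D=\{0\}$,
\begin{equation*}
\sigma(E\oplus D)=\sigma(D)+a_0(D)\,\sigma\big(L_D(E)\big).
\end{equation*}
In particular, if $D$ is itself zero-sum, then $E\oplus D$ is zero-sum precisely when the $(\dim E)$-dimensional subspace $L_D(E)$ is zero-sum. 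Since $L_D$ is injective on any complement of $D$ and has image of dimension $n-\dim D$, I can raise the dimension of a zero-sum subspace by $\dim D$ whenever a target zero-sum subspace can be placed inside $\operatorname{im}(L_D)$ and pulled back.

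With these tools the strategy is: starting from the subfield blocks (all multiples of $p$) together with a single small seed zero-sum subspace whose dimension $s$ is coprime to $p$, repeatedly apply the gluing lemma to produce zero-sum subspaces in every residue class modulo $p$, and hence, combined with symmetry, throughout the range $[3,n-3]$. The main obstacle is exactly the existence of such a seed: the orbit argument shows that any construction driven purely by a multiplicative symmetry yields only dimensions divisible by some integer $\ge 2$, so a seed of dimension coprime to $p$ (for instance dimension $3$ when $p=2$) must be produced by genuinely different means. I would obtain these seeds by explicit low-dimensional constructions adapted to the small subfields now available, and for all but finitely many $n$ by invoking the Hasse--Weil bound on the number of $\f_{2^n}$-points of the absolutely irreducible variety cutting out a zero-sum condition, in the spirit of the criterion $(l-1)(l+2)\le(n+1)/2$ recorded after Theorem~\ref{T1.2}; the finitely many remaining small $n$ are checked directly. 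A secondary technical point, to be settled by a dimension count, is to guarantee uniformly that the seed and its translates can be placed inside the relevant images $\operatorname{im}(L_D)$ so that the gluing step applies.
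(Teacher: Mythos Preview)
This theorem is quoted from \cite{Carlet-Hou} and \cite{EHRZ}; the present paper gives no proof, so there is no in-paper argument to compare against line by line. Your two ingredients are both correct: the orbit argument showing every $\f_{2^m}$-subspace is zero-sum is standard, and your gluing identity $\sigma(E\oplus D)=\sigma(D)+a_0(D)\,\sigma(L_D(E))$ is valid and is in the same spirit as Theorem~\ref{T2.3} (which is what the cited papers actually use to propagate dimensions by multiples of $l$).

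That said, what you have written is a plan, not a proof, and the part you defer is precisely the substance of the cited results. Two concrete gaps. First, the seed problem: for $p=2$ you need an odd-dimensional zero-sum subspace, for $p=3$ one in a nonzero residue class mod $3$, and for $p=5$ you need seeds in \emph{two} residue classes not exchanged by $k\leftrightarrow n-k$ (a single seed coprime to $5$ does not, even with symmetry, reach every class). You hand this off to ``explicit low-dimensional constructions'' plus Hasse--Weil for large $n$ plus direct checks for small $n$; in the cited papers these seeds are produced by genuinely case-specific arguments (hence the separate citations for $p\in\{2,3\}$ versus $p=5$), and your proposal supplies none of them. Second, the placement/reach issue you flag is real and is not disposed of by a dimension count: your recursion requires a zero-sum subspace of the previous dimension to sit inside $\operatorname{im}(L_D)$, an arbitrary $\f_2$-subspace of $\f_{2^n}$, whereas Theorem~\ref{T2.3} avoids this by controlling the $\f_{2^l}$-span of the seed. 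Without that control, the iterative step as you describe it can stall well before reaching $n/2$. In short, your framework is compatible with the cited approach, but the proof content --- constructing the seeds with the right $\f_{2^p}$-span constraints and verifying the reach --- is exactly what remains to be done.
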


\begin{thm}\label{T2.3} {\rm (\cite[Theorem~3.3]{EHRZ})}
Assume that $l\ge 2$, $l\mid n$, and that $\f_{2^n}$ contains an $r$-dimensional zero-sum subspace $F$. Let $s=\dim_{\f_{2^l}}\f_{2^l}F$, where $\f_{2^l}F$ is the $\f_{2^l}$-span of $F$. Then for each $0\le t\le n/l-s$, $\f_{2^n}$ contains a $(tl+r)$-dimensional zero-sum subspace $F_t$ with $\dim_{\f_{2^l}}\f_{2^l}F_t=t+s$. In particular,
\[
\Bigl\{tl+r:0\le t\le\frac nl-s\Bigr\}\subset\mathcal K_n.
\]
\end{thm}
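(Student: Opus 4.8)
The plan is to realize each $F_t$ as a direct sum $F_t=F\oplus V$, where $V$ is a $t$-dimensional $\f_{2^l}$-subspace chosen so that $V\cap\f_{2^l}F=\{0\}$. Such a $V$ exists inside $\f_{2^n}$ (an $\f_{2^l}$-space of dimension $n/l$) precisely because $t\le n/l-s$ and $\dim_{\f_{2^l}}\f_{2^l}F=s$; for any such $V$ one has $\dim_{\f_2}F_t=r+tl$ and $\f_{2^l}F_t=\f_{2^l}F\oplus V$, so $\dim_{\f_{2^l}}\f_{2^l}F_t=s+t$. Thus all the dimension bookkeeping is automatic, the base case $t=0$ is $F_0=F$, and everything reduces to choosing $V$ so that $F_t$ is a zero-sum subspace.

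First I would record two facts about linearized polynomials. For a subspace $W$ with vanishing polynomial $L_W(x)=\prod_{w\in W}(x+w)=\sum_i a_i x^{2^i}$ one has $\sum_{w\in W}1/(x+w)=L_W'(x)/L_W(x)=a_0/L_W(x)$, since $L_W'(x)=a_0$; comparing the two lowest coefficients gives $\sum_{0\ne w\in W}1/w=a_1/a_0$, where $a_1$ is the coefficient of $x^2$. In particular, if $W$ is an $\f_{2^l}$-subspace with $l\ge2$, then $L_W$ is $\f_{2^l}$-linearized (only exponents $2^{li}$ occur), so $a_1=0$ and hence $\sum_{0\ne w\in W}1/w=0$: every $\f_{2^l}$-subspace is a zero-sum subspace. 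This is the engine of the construction.

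Next I would compute the zero-sum defect of $F_t=F\oplus V$. Splitting according to the two summands and using that $F$ is zero-sum,
\[
\sum_{0\ne u\in F_t}\frac1u=\sum_{0\ne f\in F}\frac1f+\sum_{0\ne v\in V}\,\sum_{f\in F}\frac{1}{f+v}=0+a_0\sum_{0\ne v\in V}\frac{1}{L(v)},
\]
where $L=L_F$ and $a_0=L'(0)\ne0$; each $0\ne v\in V$ lies outside $F$ because $V\cap\f_{2^l}F=\{0\}$, so $L(v)\ne0$. Since $L$ is additive and injective on $V$, the last sum equals $\sum_{0\ne w\in L(V)}1/w$. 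Hence $F_t$ is zero-sum if and only if $L(V)$ is a zero-sum subspace, and by the previous paragraph it suffices to arrange that $L(V)$ be an $\f_{2^l}$-subspace.

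Finally, to force $L(V)$ to be $\f_{2^l}$-invariant I would use the residue decomposition $L=\sum_{j=0}^{l-1}L^{(j)}$, where $L^{(j)}(x)=\sum_{i\equiv j\,(l)}a_i x^{2^i}$ satisfies $L^{(j)}(\lambda x)=\lambda^{2^j}L^{(j)}(x)$ for $\lambda\in\f_{2^l}$; note $L^{(0)}$ is $\f_{2^l}$-linear. On the $\f_{2^l}$-subspace $Z:=\bigcap_{j=1}^{l-1}\ker L^{(j)}$ the map $L$ agrees with $L^{(0)}$, so $L(V)=L^{(0)}(V)$ is an $\f_{2^l}$-subspace whenever $V\subseteq Z$, and the construction goes through as soon as $Z$ contains a $t$-dimensional $\f_{2^l}$-subspace meeting $\f_{2^l}F$ trivially for every $t\le n/l-s$; equivalently, as soon as $Z+\f_{2^l}F=\f_{2^n}$. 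Proving this span identity is the heart of the matter and the step I expect to be the main obstacle: it says the $\f_{2^l}$-semilinear map $(L^{(1)},\dots,L^{(l-1)})$ (whose kernel is the $\f_{2^l}$-subspace $Z$) takes the same values on $\f_{2^l}F$ as on all of $\f_{2^n}$, which I would attack by a rank computation using the transformation rule $L^{(j)}(\lambda x)=\lambda^{2^j}L^{(j)}(x)$ together with $\ker L=F$. Should the clean span identity fail in full generality, the fallback is an inductive, one-$\f_{2^l}$-line-at-a-time argument: given a zero-sum $G=F_t$ with $\f_{2^l}G\ne\f_{2^n}$, find $\alpha\notin\f_{2^l}G$ with $L_G(\alpha\f_{2^l})$ merely zero-sum (rather than an $\f_{2^l}$-line), a milder requirement that enlarges the admissible set of $\alpha$, and supply a valid $\alpha$ by counting; the induction then climbs to $t=n/l-s$. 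Once all the $F_t$ are built, the displayed inclusion $\{tl+r:0\le t\le n/l-s\}\subset\mathcal K_n$ is immediate from the definition of $\mathcal K_n$.
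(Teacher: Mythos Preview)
The paper does not prove Theorem~\ref{T2.3}; it is quoted verbatim from \cite[Theorem~3.3]{EHRZ}. So there is no ``paper's own proof'' to compare against, and your proposal has to stand on its own.

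Your overall setup is sound: writing $F_t=F\oplus V$ and reducing the zero--sum condition to $L_F(V)$ being zero--sum is correct, as is the observation that every $\f_{2^l}$-subspace ($l\ge2$) is zero--sum. The difficulty, as you yourself flag, is manufacturing $V$ with $L_F(V)$ an $\f_{2^l}$-subspace, and here the main line of attack breaks down. Your candidate $Z=\bigcap_{j=1}^{l-1}\ker L^{(j)}$ is typically $\{0\}$. Indeed, whenever some residue class $j\in\{1,\dots,l-1\}$ is hit by exactly one exponent of $L_F$, the corresponding $L^{(j)}$ is a nonzero monomial $a_ix^{2^i}$ and $\ker L^{(j)}=\{0\}$. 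This already happens in the most relevant case for the paper: take $r<l$ (e.g.\ $r=3$, $l=7$, $n=49$, with $F$ a $3$-dimensional zero--sum subspace supplied by Theorem~\ref{T3.2}); then $L^{(r)}=a_rx^{2^r}$ is a nonzero monomial and $Z=\{0\}$. So the span identity $Z+\f_{2^l}F=\f_{2^n}$ you hope for is simply false in general, and the rank heuristic you sketch cannot rescue it.

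Your fallback --- an inductive one-line-at-a-time extension, finding $\alpha\notin\f_{2^l}G$ with $L_G(\f_{2^l}\alpha)$ zero--sum ``by counting'' --- is only a plan, not a proof: you neither write down the polynomial condition on $\alpha$ nor verify it has solutions outside $\f_{2^l}G$ for every admissible $G$. A more promising reformulation is to swap the roles of $F$ and $V$ in your reduction: with $W=\f_{2^l}\alpha$ one gets
\[
\sum_{0\ne u\in F_t\oplus W}\frac1u
= b_0\sum_{0\ne f\in F_t}\frac1{L_W(f)},\qquad L_W(x)=x^{2^l}+\alpha^{2^l-1}x,
\]
so the task becomes choosing $\alpha$ so that the $r+tl$-dimensional subspace $L_W(F_t)$ is zero--sum. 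This is a single algebraic condition on $\alpha^{2^l-1}$, and the leading coefficient in that condition vanishes precisely because $F_t$ is zero--sum; but turning this into an existence proof for \emph{every} $t\le n/l-s$ still requires a genuine argument that you have not supplied. As written, the proposal does not establish the theorem.
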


For $k>0$, let
\begin{equation}\label{2.3}
\Delta(X_1,\dots,X_k)=\left|
\begin{matrix}
X_1&\cdots&X_k\cr
X_1^2&\cdots&X_k^2\cr
\vdots&&\vdots\cr
X_1^{2^{k-1}}&\cdots&X_k^{2^{k-1}}\end{matrix}
\right|\in\f_2[X_1,\dots,X_k],
\end{equation}
and for $0\le i\le k$, let
\begin{equation}\label{2.4}
\Delta_i(X_1,\dots,X_k)=\left|
\begin{matrix}
X_1&\cdots&X_k\cr
\vdots&&\vdots\cr
X_1^{2^{i-1}}&\cdots&X_k^{2^{i-1}}\cr
X_1^{2^{i+1}}&\cdots&X_k^{2^{i+1}}\cr
\vdots&&\vdots\cr
X_1^{2^k}&\cdots&X_k^{2^k}\end{matrix}
\right|\in\f_2[X_1,\dots,X_k].
\end{equation}
As we will see in the next section, $\Delta\mid\Delta_i$. Set 
\begin{equation}\label{2.5}
F_k(X_1,\dots,X_k)=\frac{\Delta_1(X_1,\dots,X_k)}{\Delta(X_1,\dots,X_k)}\in\f_2[X_1,\dots,X_k].
\end{equation}

\begin{thm}\label{T2.4} {\rm (\cite[Proposition~8]{Carlet-JC-2025}, \cite[\S4]{Carlet-Hou})}
A $k$-dimensional subspace $E$ of $\f_{2^n}$ is a zero-sum subspace if and only if 
\[
F_k(u_1,\dots,u_k)=0,
\]
where $u_1,\dots,u_k$ is any basis of $E$.
\end{thm}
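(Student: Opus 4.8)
The plan is to realize both sides of the claimed equivalence as data attached to the \emph{subspace polynomial} of $E$, and to match the coefficient of $X^2$ in that polynomial with the intrinsic quantity $\sum_{0\ne u\in E}1/u$. Throughout I fix a basis $u_1,\dots,u_k$ of $E$ and abbreviate $\Delta=\Delta(u_1,\dots,u_k)$ and $\Delta_r=\Delta_r(u_1,\dots,u_k)$. First I would record the two classical facts about Moore determinants. Over $\f_2$ the Moore factorization reads
\[
\Delta(X_1,\dots,X_k)=\prod_{0\ne(c_1,\dots,c_k)\in\f_2^k}\Bigl(\sum_{j=1}^k c_jX_j\Bigr),
\]
the product running once over each of the $2^k-1$ nonzero $\f_2$-linear forms; evaluating at the basis, the forms $\sum_j c_ju_j$ run over the nonzero elements of $E$, so $\Delta=\prod_{0\ne u\in E}u\ne0$. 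The same factorization yields the divisibility $\Delta\mid\Delta_r$ promised in the text: any nontrivial relation $\sum_j c_jX_j=0$, raised to the $2^e$-th power and using $c_j^{2^e}=c_j$, forces a linear dependence among the rows of the matrix defining $\Delta_r$, so each irreducible factor $\sum_j c_jX_j$ of $\Delta$ divides $\Delta_r$. Hence $F_k=\Delta_1/\Delta$ is a polynomial and its evaluation is legitimate.

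Next I would introduce the linearized subspace polynomial $L_E(X)=\prod_{u\in E}(X+u)$ and identify it with a bordered Moore determinant. Adjoining the column $(X,X^2,\dots,X^{2^k})^{\mathsf T}$, the $(k+1)\times(k+1)$ Moore determinant $M(X,u_1,\dots,u_k)$ is, as a function of $X$, an $\f_2$-linearized polynomial of degree $2^k$ with leading coefficient $\Delta$ that vanishes for every $X\in E$ (when $X=\sum_jc_ju_j$ the $X$-column is that same $\f_2$-combination of the remaining columns). Comparing leading coefficients gives $L_E(X)=M(X,u_1,\dots,u_k)/\Delta$. Expanding $M(X,u_1,\dots,u_k)$ along the $X$-column, with signs irrelevant in characteristic $2$, identifies the cofactor of $X^{2^r}$ with $\Delta_r$, so
\[
L_E(X)=\sum_{r=0}^k\frac{\Delta_r}{\Delta}\,X^{2^r}.
\]
In particular the coefficient of $X^{2^1}=X^2$ is exactly $\Delta_1/\Delta=F_k(u_1,\dots,u_k)$.

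Finally I would extract $\sum_{0\ne u\in E}1/u$ from the same polynomial. Writing $L_E(X)=X\,P(X)$ with $P(X)=\prod_{0\ne u\in E}(X+u)$, the coefficient of $X^2$ in $L_E$ equals the coefficient of $X$ in $P$, that is the value of the formal derivative $P'(0)$; and a direct expansion gives
\[
P'(0)=\sum_{0\ne v\in E}\ \prod_{\substack{0\ne u\in E\\u\ne v}}u=\Bigl(\prod_{0\ne u\in E}u\Bigr)\sum_{0\ne v\in E}\frac1v=\Delta\sum_{0\ne u\in E}\frac1u.
\]
Combining the two expressions for this coefficient yields the clean identity $\sum_{0\ne u\in E}1/u=F_k(u_1,\dots,u_k)/\Delta$, and since $\Delta\ne0$ this vanishes if and only if $F_k(u_1,\dots,u_k)=0$, which is the assertion. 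Basis-independence of the criterion is then automatic, since the left-hand side is intrinsic to $E$.

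I anticipate that the only real care is in the characteristic-$2$ bookkeeping: the disappearance of signs in the cofactor expansion, the identities $c^{2^e}=c$ for $c\in\f_2$ used both in the factorization and in the divisibility argument, and the check that $P$ is separable with $P(0)=\Delta\ne0$ so that the derivative computation is valid. The algebraic skeleton is otherwise forced once the Moore factorization and the determinantal form of $L_E$ are in place.
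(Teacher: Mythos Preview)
The paper does not prove Theorem~\ref{T2.4}; it is quoted in Section~2 as a previous result from \cite{Carlet-JC-2025} and \cite{Carlet-Hou}, so there is no in-paper argument to compare against. Your proof is correct: the identification of $L_E(X)=\prod_{u\in E}(X+u)$ with the bordered Moore determinant divided by $\Delta$, the cofactor expansion yielding $L_E(X)=\sum_{r=0}^k(\Delta_r/\Delta)X^{2^r}$, and the elementary-symmetric computation of the coefficient of $X^2$ as $\Delta\sum_{0\ne u\in E}1/u$ all go through in characteristic~$2$ exactly as you describe. This is in fact the standard route taken in the cited sources (the subspace polynomial is the main vehicle in \cite[\S4]{Carlet-Hou}), so your approach aligns with the literature even though the present paper omits the details.
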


\section{Proof of Theorem~1.2}

It is well known (see for example \cite[Lemma~3.51]{Lidl-Niederreiter-FF-1997}) that 
\begin{equation}\label{3.1}
\Delta(X_1,\dots,X_k)=\prod_{0\ne(a_1,\dots,a_k)\in\f_2^k}(a_1X_1+\cdots+a_kX_k).
\end{equation} 
For all $0\le i\le k$, we have $\Delta(X_1,\dots,X_k)\mid\Delta_i(X_1,\dots,X_k)$ since $(a_1X_1+\cdots+a_kX_k)\mid\Delta_i(X_1,\dots,X_k)$ for all $0\ne (a_1,\dots,a_k)\in\f_2^k$. Let $u_1,\dots, u_l\in\f_{2^n}$ be linearly independent over $\f_2$ and treat them as being fixed. For $k>0$ define
\begin{equation}\label{3.2}
F_{k,l}(X_1,\dots,X_k)=F_{k+l}(X_1,\dots,X_k,u_1,\dots,u_l)=\frac{\Delta_1(X_1,\dots,X_k,u_1,\dots,u_l)}{\Delta(X_1,\dots,X_k,u_1,\dots,u_l)}.
\end{equation}
(Note that $F_{k,l}$ depends not only on $l$, but also on $u_1,\dots,u_l$.) Here are a few straightforward facts about $\Delta$, $\Delta_1$ and $F_{k,l}$:
\begin{itemize}
\item 
$v_1,\dots,v_k\in\f_{2^n}$ are linearly independent over $\f_2$ if and only if $\Delta(v_1,\dots,v_k)$ $\ne 0$.

\smallskip
\item
$v_1,\dots,v_k\in\f_{2^n}$ form a basis of a non zero-sum subspace of $\f_{2^n}$ if and only if $\Delta_1(v_1,\dots,v_k)\ne 0$.

\smallskip
\item 
$\deg\Delta(X_1,\dots,X_k,u_1,\dots,u_l)=2^{k+l}-2^l$.

\smallskip
\item 
If $u_1,\dots,u_l$ ($l>0$) is a basis of a non zero-sum subspace of $\f_{2^n}$, then
\begin{align*}
\kern2.5em &\deg\Delta_1(X_1,\dots,X_k,u_1,\dots,u_l)=2^{k+l+1}-2^{l+1}\quad\text{(see Lemma~\ref{L1} below)},\cr
&\deg F_{k,l}(X_1,\dots,X_k)=2^{k+l}-2^l,\cr
&\deg_{X_j}\! F_{k,l}(X_1,\dots,X_k)=2^{k+l-1},\quad 1\le j\le k.
\end{align*}

\smallskip
\item 
For each $1\le j\le k$, $\Delta_1(X_1,\dots,X_k,u_1,\dots,u_l)$ is a $2$-polynomial in $X_j$ whose coefficient of $X_j$ is nonzero, i.e., 
\[
\Delta_1(X_1,\dots,X_k,u_1,\dots,u_l)=\sum_{i\ge 0}a_iX_j^{2^i}, 
\]
where $a_i\in\f_{2^n}[X_1,\dots,X_{j-1},X_{j+1},\dots,X_k]$ and $a_0\ne 0$. Consequently, $\Delta_1(X_1,\dots,X_k,u_1,\dots,u_l)$ is separable in $X_j$, and hence $\Delta_1(X_1,\dots,X_k,$ $u_1,\dots,u_l)$ is square-free in $\f_{2^n}[X_1,\dots,X_k]$.

\smallskip
\item 
It follows from \eqref{3.2} that 
\begin{equation}\label{3.3}
\gcd(F_{k,l}(X_1,\dots,X_k),\,\Delta(X_1,\dots,X_k,u_1,\dots,u_l))=1
\end{equation}
since $\Delta_1(X_1,\dots,X_k,u_1,\dots,u_l)$ is square-free.
\end{itemize}

\begin{lem}\label{L1}
Assume that $u_1,\dots,u_l$ ($l>0$) is a basis of a non zero-sum subspace of $\f_{2^n}$. Then 
\[
\deg\Delta_1(X_1,\dots,X_k,u_1,\dots,u_l)=2^{k+l+1}-2^{l+1}.
\]
\end{lem}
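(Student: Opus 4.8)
The plan is to compute the total degree of $\Delta_1(X_1,\dots,X_k,u_1,\dots,u_l)$ directly from its determinantal definition, using the Laplace-type (cofactor) expansion along the columns corresponding to the fixed entries $u_1,\dots,u_l$. Recall that $\Delta_1$ is the determinant of the $(k+l)\times(k+l)$ matrix obtained from the full Moore-type matrix with exponents $2^0,2^1,\dots,2^{k+l}$ by deleting the row with exponent $2^1$; thus the surviving row exponents are $2^0,2^2,2^3,\dots,2^{k+l}$. I would first record that, over $\f_{2^n}[X_1,\dots,X_k]$, the monomials appearing in $\Delta_1$ are (up to the coefficients coming from the $u_i$) products of powers $X_j^{2^{e}}$ with distinct exponents $e$ drawn from the exponent set, one factor per chosen column; the total degree of a given term is the sum of the $2^{e}$ over the exponents assigned to the $X$-columns. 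To maximize the total degree in $X_1,\dots,X_k$ one should assign to the $k$ variable columns the $k$ largest available exponents, namely $2^{k+l},2^{k+l-1},\dots,2^{l+2},2^{l}$ — note the jump skipping $2^{l+1}$, which is exactly the exponent that was deleted from the row set but here matters through which exponents remain assignable after the $u$-columns take theirs. This gives a candidate maximal total degree
\[
2^{k+l}+2^{k+l-1}+\cdots+2^{l+2}+2^{l}=(2^{k+l+1}-2^{l+2})+2^{l}=2^{k+l+1}-2^{l+1}-2^{l}+2^{l}-2^{l+1}+2^{l+1},
\]
and a short recount shows this equals $2^{k+l+1}-2^{l+1}$, matching the claim.

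Next I would verify that the coefficient of this top-degree monomial is actually nonzero, which is where the hypothesis that $u_1,\dots,u_l$ spans a non zero-sum subspace enters. By the cofactor expansion, the coefficient of the monomial $X_1^{2^{k+l}}\cdots$ (with the variable columns carrying exponents $2^{k+l},\dots,2^{l+2},2^{l}$) is, up to sign, a determinant in $u_1,\dots,u_l$ whose rows are the complementary exponents — precisely the exponents $2^0=1,2^1,\dots$ up through $2^{l+1}$ with one of them omitted, assembled so that this minor is exactly $\pm\Delta_1(u_1,\dots,u_l)$ (an $l\times l$ Moore-type determinant with the $2^1$-row deleted). Indeed, the one variable exponent that "dips down" to $2^{l}$ is forced precisely because the $u$-columns must supply the low exponents $1,2,\dots$, and the bookkeeping of which $l$ exponents remain for them is exactly the exponent pattern of $\Delta_1$ on $l$ arguments. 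Since by hypothesis $u_1,\dots,u_l$ is a basis of a non zero-sum subspace, the third bullet point preceding the lemma gives $\Delta_1(u_1,\dots,u_l)\ne 0$ (here I am invoking the recorded fact that $v_1,\dots,v_k$ form a basis of a non zero-sum subspace iff $\Delta_1(v_1,\dots,v_k)\ne 0$, applied to the $u_i$). Hence this leading coefficient is nonzero and the total degree is exactly $2^{k+l+1}-2^{l+1}$.

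I expect the main obstacle to be the combinatorial bookkeeping in the cofactor expansion: one must argue cleanly that among all ways of choosing which exponents go to the $X$-columns versus the $u$-columns, (i) the choice described above genuinely gives the strictly largest total $X$-degree, with no tie from a competing monomial that could cause cancellation, and (ii) the complementary minor in the $u_i$ is honestly $\pm\Delta_1(u_1,\dots,u_l)$ and not some other minor that might vanish even when the $u_i$ span a non zero-sum subspace. Point (i) is handled by a greedy/exchange argument on the exponent multiset together with the base-$2$ uniqueness of the sum (distinct powers of $2$ sum uniquely), so no two distinct exponent assignments to the $X$-columns produce the same total degree unless they are the same set — ruling out cancellation at the top. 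Point (ii) is a direct identification of the leftover exponent set $\{2^0,\dots,2^{l+1}\}\setminus\{2^{l+1}\}$ with the defining exponent set of $\Delta_1$ on $l$ variables, after accounting for the deleted $2^1$-row of the ambient matrix; I would isolate this as the one genuinely delicate verification and present it carefully, the rest being a routine degree count.
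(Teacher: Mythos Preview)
Your approach is exactly the paper's: a Laplace expansion along the last $l$ columns, isolating the block of $X$-rows that maximizes the total $X$-degree and identifying the complementary $u$-minor as $\Delta_1(u_1,\dots,u_l)$. However, the bookkeeping in your execution is off in a way that, if left as written, breaks the argument.

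The row that is deleted from the Moore matrix is the one with exponent $2^1$, not $2^{l+1}$. The surviving exponent set is $\{0,2,3,\dots,k+l\}$, and since $l\ge 1$ the missing index $1$ lies at or below $l$. Hence the $k$ largest surviving exponents are simply the contiguous block $\{l+1,l+2,\dots,k+l\}$ (no jump), giving the leading $X$-block
\[
\left|\begin{matrix}
X_1^{2^{l+1}}&\cdots&X_k^{2^{l+1}}\\
\vdots&&\vdots\\
X_1^{2^{k+l}}&\cdots&X_k^{2^{k+l}}
\end{matrix}\right|=\Delta(X_1,\dots,X_k)^{2^{l+1}},
\]
whose total degree is $2^{l+1}+\cdots+2^{k+l}=2^{k+l+1}-2^{l+1}$ directly. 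Your chosen set $\{k+l,\dots,l+2,l\}$ sums to $2^{k+l+1}-3\cdot 2^{l}$, not to $2^{k+l+1}-2^{l+1}$; the displayed ``short recount'' is an arithmetic slip, and for $l=1$ your set even contains the forbidden index $1$. With your (incorrect) choice the complementary $u$-exponents would be $\{0,2,\dots,l-1,l+1\}$, which is \emph{not} the row pattern of $\Delta_1(u_1,\dots,u_l)$, so point (ii) as you state it would fail. With the corrected top block the complementary exponents are $\{0,2,3,\dots,l\}$, which is precisely $\Delta_1$ on $l$ arguments, and the non zero-sum hypothesis gives $\Delta_1(u_1,\dots,u_l)\ne 0$. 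Your point (i) about uniqueness of the top degree via distinctness of sums of distinct powers of $2$ is correct and is implicit in the paper's phrase ``$+$ terms of lower degree''.
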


\begin{proof}
We have 
\begin{align*}
\Delta_1(X_1,\dots,X_k,u_1,\dots,u_l)=\,&\left|\begin{matrix}
X_1^{2^{l+1}}&\cdots&X_k^{2^{l+1}}\cr
\vdots&&\vdots\cr
X_1^{2^{k+l}}&\cdots&X_k^{2^{k+l}}
\end{matrix}\right|\Delta_1(u_,\dots,u_l)\cr
&+\text{terms of lower degree},
\end{align*}
where $\Delta_1(u_,\dots,u_l)\ne 0$.
\end{proof}

\begin{lem}\label{L2}
Let $u_1,\dots,u_l\in\f_{2^n}$ be linearly independent over $\f_2$, where $l\le n-2$. Then there exists $u_{l+1}\in\f_{2^n}$ such that $u_1,\dots,u_{l+1}$ is a basis of an $(l+1)$-dimensional non zero-sum subspace of $\f_{2^n}$. 
\end{lem}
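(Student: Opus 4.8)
The plan is to turn the lemma into a statement about a single $2$-polynomial and then apply a covering argument. By Theorem~\ref{T2.4} together with \eqref{2.5}, it suffices to find $u_{l+1}\in\f_{2^n}$ that does not lie in $U$, the $\f_2$-span of $u_1,\dots,u_l$, and for which $\Delta_1(u_1,\dots,u_l,u_{l+1})\ne 0$: the first requirement makes $u_1,\dots,u_{l+1}$ a basis of some $(l+1)$-dimensional subspace $E$, so that $\Delta(u_1,\dots,u_{l+1})\ne 0$, and then $F_{l+1}(u_1,\dots,u_{l+1})=\Delta_1(u_1,\dots,u_{l+1})/\Delta(u_1,\dots,u_{l+1})\ne 0$ says exactly that $E$ is not zero-sum.

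With $u_1,\dots,u_l$ fixed, I would study the one-variable polynomial $g(X)=\Delta_1(u_1,\dots,u_l,X)\in\f_{2^n}[X]$. Expanding the determinant \eqref{2.4} (with the role of $k$ played by $l+1$) along the column of $X$, every entry of that column is a power $X^{2^i}$ with $i\in\{0,2,3,\dots,l+1\}$, so $g(X)=\sum_{i\in\{0,2,3,\dots,l+1\}}C_iX^{2^i}$ is a $2$-polynomial in $X$ of $2$-degree at most $l+1$, the $C_i$ being $l\times l$ minors in $u_1,\dots,u_l$. The one point that needs an argument is $g\not\equiv 0$. For that I would identify the coefficient $C_0$ of $X=X^{2^0}$ with the minor obtained by deleting the row $X^{2^0}$ and the column of $X$, which is $\det\bigl[u_j^{2^i}\bigr]_{2\le i\le l+1,\;1\le j\le l}=\Delta(u_1^4,\dots,u_l^4)=\Delta(u_1,\dots,u_l)^4$; the middle equality holds because $u_j^{2^i}=(u_j^{4})^{2^{i-2}}$ for $i\ge 2$, and the last one follows from \eqref{3.1} and $a^4=a$ for $a\in\f_2$. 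Since $u_1,\dots,u_l$ are linearly independent, $\Delta(u_1,\dots,u_l)\ne 0$, so $C_0\ne 0$ and $g\ne 0$.

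Once $g$ is a nonzero $2$-polynomial of $2$-degree at most $l+1$, the conclusion is quick: its set of zeros $W=\{x\in\f_{2^n}:g(x)=0\}$ is an $\f_2$-subspace of $\f_{2^n}$ of dimension at most $l+1\le n-1$, hence a proper subspace, and $U$ is proper as well since $l\le n-2<n$. As a vector space is never the union of two proper subspaces, there is some $u_{l+1}\in\f_{2^n}\setminus(W\cup U)$, and this $u_{l+1}$ is the one we want. (Alternatively, one could count directly: $|W\cup U|\le 2^{l+1}+2^l<2^{l+2}\le 2^n$.) The only genuine obstacle is the nonvanishing of $g$, i.e., the coefficient computation above; everything that follows is routine.
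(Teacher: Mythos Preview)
Your proof is correct and follows essentially the same route as the paper: both identify the coefficient of $X$ in $g(X)=\Delta_1(u_1,\dots,u_l,X)$ as $\Delta(u_1,\dots,u_l)^4\ne 0$, and then use that $\deg g\le 2^{l+1}<2^n$ (you phrase this via the subspace structure of the root set, which is equivalent) to find a non-root $u_{l+1}$. One minor redundancy: you need not avoid $U$ separately, since the paper's second bullet before Lemma~\ref{L1} shows that $\Delta_1(u_1,\dots,u_{l+1})\ne 0$ already forces $u_1,\dots,u_{l+1}$ to be a basis of a non zero-sum subspace (indeed $U\subset W$).
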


\begin{proof}
It suffices to show that there exists $u_{l+1}\in\f_{2^n}$ such that $\Delta_1(u_1,\dots,u_{l+1})\ne 0$. Consider $\Delta_1(u_1,\dots,u_l,X)\in\f_{2^n}[X]$. We have 
\[
\Delta_1(u_1,\dots,u_l,X)=X\Delta(u_1^{2^2},\dots,u_l^{2^2})+\text{terms of higher degree},
\]
where $\Delta(u_1^{2^2},\dots,u_l^{2^2})=\Delta(u_1,\dots,u_l)^4\ne 0$. Hence $\Delta_1(u_1,\dots,u_l,X)\ne 0$. Since $\deg\Delta_1(u_1,\dots,u_l,X)\le 2^{l+1}<2^n$, there exists $u_{l+1}\in\f_{2^n}$ such that $\Delta_1(u_1,\dots,u_{l+1})\ne 0$.
\end{proof}

It was proved in \cite{Carlet-Hou} that $F_k(X_1,\dots,X_k)=F_{k,0}(X_1,\dots,X_k)$ is absolutely irreducible (i.e., irreducible over the algebraic closure $\overline\f_2$ of $\f_2$) when $k\ge 3$. We extend this result to the following form as the first step towards the proof of Theorem~\ref{T1.2}.

\begin{lem}\label{L3.1}
Assume that $k\ge 2$, $l>0$, and $u_1,\dots,u_l$ is a basis of non zero-sum subspace of $\f_{2^n}$. Then $F_{k,l}(X_1,\dots,X_k)$ is absolutely irreducible.
\end{lem}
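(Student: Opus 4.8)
The plan is to reduce the absolute irreducibility of $F_{k,l}$ to the already-established case $l=0$ by an induction that peels off the parameters $u_1,\dots,u_l$ one at a time. The key observation is the ``Laplace-type'' expansion along the last column: writing $v=u_l$ and treating $u_1,\dots,u_{l-1}$ as fixed, one has $\Delta(X_1,\dots,X_k,u_1,\dots,u_{l-1},v)$ and $\Delta_1(X_1,\dots,X_k,u_1,\dots,u_{l-1},v)$ as polynomials in $v$ of controlled degree, and by the product formula \eqref{3.1} together with the divisibility $\Delta\mid\Delta_1$, the quotient $F_{k,l}$ sits inside the polynomial ring. I would first prove the statement for $k=2$ directly (this is the genuinely new base case, since \cite{Carlet-Hou} only handled $k\ge 3$ for $l=0$), and then argue that for $k\ge 3$ the case $l>0$ follows from the case $l=0$ together with $k'=k+l\ge 3$ by a specialization argument: $F_{k,l}(X_1,\dots,X_k)=F_{k+l}(X_1,\dots,X_k,u_1,\dots,u_l)$ is obtained from the absolutely irreducible polynomial $F_{k+l}(X_1,\dots,X_{k+l})$ by substituting algebraically independent-looking values; but substitution generally destroys irreducibility, so this naive route needs care.

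A cleaner approach, and the one I would actually carry out, is to use the Bertini–Noether / generic specialization principle in reverse, combined with a degree count. Consider $F_{k,l}$ as a polynomial in $X_k$ over the field $K=\overline{\f}_2(X_1,\dots,X_{k-1})$. From the bullet list we know $\deg_{X_k}F_{k,l}=2^{k+l-1}$, and from the structure of $\Delta_1$ as a separable $2$-polynomial in $X_k$ (coefficient of $X_k$ nonzero) together with $\gcd(F_{k,l},\Delta(\cdots))=1$ from \eqref{3.3}, one controls the possible factorizations. The strategy is: (i) show $F_{k,l}$ is separable and square-free in each variable, hence reduced; (ii) suppose $F_{k,l}=GH$ is a nontrivial factorization over $\overline{\f}_2$; (iii) specialize $X_1,\dots,X_{k-1}$ (or rather, use that $\Delta\mid\Delta_1$ forces the zero locus of $F_{k,l}$ to be closely tied to that of $\Delta_1$) to derive a contradiction with the known irreducibility in the ``fully generic'' case. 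The ratio $\Delta_1/\Delta$ has the feature that on the hyperplane $a_1X_1+\cdots+a_kX_k=0$ the numerator and denominator both vanish to first order, so $F_{k,l}$ restricted to such a hyperplane is again (up to units) a polynomial of the same shape in one fewer variable — this recursive structure is what ultimately connects $l>0$ back to $l=0$.

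Concretely, I expect the cleanest inductive step to go downward in $l$: assume the result for $l-1$ (all $k\ge 2$) and prove it for $l$. Fix a basis $u_1,\dots,u_l$ of a non-zero-sum subspace. By Lemma~\ref{L2}-type reasoning, $u_1,\dots,u_{l-1}$ also spans a non-zero-sum subspace (or can be adjusted to one — this needs a small argument, since an arbitrary sub-collection of a non-zero-sum basis need not be non-zero-sum; one uses instead that the relevant $\Delta_1$ minors are nonzero because the $u_i$ are linearly independent). Then $F_{k+1,l-1}(X_1,\dots,X_k,X_{k+1})$ with $X_{k+1}$ playing the role of the new variable is absolutely irreducible by the inductive hypothesis; specializing $X_{k+1}\mapsto u_l$ gives $F_{k,l}$. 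To control what specialization does, view $F_{k+1,l-1}$ as defining an irreducible hypersurface $\mathcal H$ in $\mathbb A^{k+1}$ and argue that the fiber over $X_{k+1}=u_l$ is irreducible because the projection $\mathcal H\to\mathbb A^1_{X_{k+1}}$ is dominant with irreducible generic fiber of the expected dimension and $u_l$ avoids the (finite) bad locus — here one must verify that $u_l$ is not a root of the relevant discriminant/leading-coefficient polynomial, using the explicit degree bounds $\deg_{X_{k+1}}\Delta_1\le 2^{k+l}$ and comparing with $2^n$ exactly as in the proof of Lemma~\ref{L2}.

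The main obstacle, and where I would spend the most effort, is controlling the specialization $X_{k+1}\mapsto u_l$: irreducibility is not preserved under arbitrary substitution, so one needs a genuine reason that $u_l$ is a ``good'' value. The honest fix is probably to not specialize a generic-point variable but instead to run the argument the other way: keep $X_1,\dots,X_k$ generic and show directly that any factorization of $F_{k,l}$ over $\overline{\f}_2$ would, after homogenizing and examining the behavior along the hyperplanes $a_1X_1+\cdots+a_kX_k=0$ (where the product formula \eqref{3.1} gives precise vanishing orders), force a matching factorization of $F_{k+l}$ in $k+l$ variables — contradicting \cite{Carlet-Hou} for $k+l\ge 3$ and requiring a separate hands-on treatment only for $(k,l)=(2,1)$ and $(2,2)$ where $k+l\le 3$. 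I would isolate those two small cases and handle them by an explicit computation of $F_{2,1}$ and $F_{2,2}$ (which are polynomials of degree $2^{l+1}-2^l\in\{2,4\}$ in two variables, small enough to factor by hand or check the Hasse–Weil-type irreducibility criterion directly), and then let the inductive/geometric argument cover all larger $k+l$.
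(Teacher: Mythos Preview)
Your proposal has genuine gaps, and the overall strategy of reducing to the $l=0$ case via specialization is not the right route here.

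The central problem is the inductive step. You want to pass from $F_{k+1,l-1}$ (with one fewer parameter) to $F_{k,l}$ by specializing $X_{k+1}\mapsto u_l$. For the inductive hypothesis to apply you need $u_1,\dots,u_{l-1}$ to be a basis of a non zero-sum subspace, and you correctly flag that this is \emph{not} automatic. Your proposed fix --- ``the relevant $\Delta_1$ minors are nonzero because the $u_i$ are linearly independent'' --- is simply wrong: linear independence gives $\Delta(u_1,\dots,u_{l-1})\ne 0$, not $\Delta_1(u_1,\dots,u_{l-1})\ne 0$. There is no way around this: the hypothesis on $u_1,\dots,u_l$ does not propagate to subfamilies, so the induction cannot start. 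The fallback you sketch (lifting a factorization of $F_{k,l}$ to one of $F_{k+l}$) is asserted but not argued, and in general factorizations do not lift under un-specialization any more than irreducibility descends under specialization. Finally, your ``small cases'' are misidentified: $\deg F_{2,l}=2^{2+l}-2^l=3\cdot 2^l$ (so $6$ and $12$ for $l=1,2$, not $2$ and $4$), and $(k,l)=(2,2)$ has $k+l=4$, not $\le 3$.

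The paper's proof avoids all of this by a direct Eisenstein argument, with no induction and no appeal to the $l=0$ result. One expands $F_{k,l}$ as a polynomial in $X_1$: the leading coefficient is $C_{k+l-1}=F_{k-1,l}(X_2,\dots,X_k)$, every intermediate coefficient $C_i$ ($0\le i\le k+l-2$) is a multiple of $F_{k-1,l}$, and the constant term is $C_{-1}=\Delta(X_2,\dots,X_k,u_1,\dots,u_l)^2$. Since $\gcd(F_{k-1,l},\Delta(\cdots))=1$ by \eqref{3.3} and $F_{k-1,l}$ is square-free (as a factor of the separable $\Delta_1$), any irreducible factor $f$ of $C_{k+l-1}$ satisfies $f\mid C_i$ for $0\le i\le k+l-1$, $f^2\nmid C_{k+l-1}$, and $f\nmid C_{-1}$. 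This is (reverse) Eisenstein over $\overline{\f}_2[X_2,\dots,X_k]$, and the condition $k\ge 2$ is used only to ensure $\deg C_{k+l-1}=2^{k+l-1}-2^l>0$ so that such an $f$ exists. The non zero-sum hypothesis on $u_1,\dots,u_l$ enters only through the degree computations (Lemma~\ref{L1}), not through any inductive descent.
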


\begin{proof}
By Equations~(4.3) -- (4.6) in \cite{Carlet-Hou}, we have
\begin{equation}\label{3.4}
F_{k,l}(X_1,\dots,X_k)=C_{k+l-1}X_1^{2^{k+l-1}}+C_{k+l-2}X_1^{2^{k+l-2}}+\cdots+C_0X_1+C_{-1},
\end{equation}
where
\begin{align}\label{3.5}
&C_{k+l-1}=F_{k-1,l}(X_2,\dots,X_k),\\ \label{3.6}
&C_i=F_{k-1,l}(X_2,\dots,X_k)\frac{\Delta_i(X_2,\dots,X_k,u_1,\dots,u_l)}{\Delta(X_2,\dots,X_k,u_1,\dots,u_l)},\quad 0\le i\le k+l-2,\\ \label{3.7}
&C_{-1}=\Delta(X_2,\dots,X_k,u_1,\dots,u_l)^2.
\end{align}
Note that $\deg C_{k+l-1}=\deg F_{k-1,l}(X_2,\dots,X_k)=2^{k-1+l}-2^{l}>0$ since $k\ge 2$. By \eqref{3.3}, \eqref{3.5} and \eqref{3.7},
\begin{equation}\label{3.8}
\gcd(C_{k+l-1},C_{-1})=1.
\end{equation}
Thus $F_{k,l}(X_1,\dots,X_k)$ as a polynomial in $X_1$ over $\overline\f_2[X_2,\dots,X_k]$ is primitive. Let $f\in\overline\f_1[X_2,\dots,X_k]$ be any irreducible factor of $C_{k+l-1}$. Then $f\mid C_i$ for all $0\le i\le k+l-2$ (by \eqref{3.6}), $f\nmid C_{-1}$ (by \eqref{3.8}), and $f^2\nmid C_{k+l-1}$ (since $C_{k+l-1}=F_{k-1,l}(X_2,\dots,X_k)$ is square-free). By Eisenstein's criterion, $F_{k,l}(X_1,\dots,X_k)$ is irreducible in $\overline\f_2[X_1,\dots,X_k]$.
\end{proof}

For $f(X_1,\dots,X_k)\in\f[X_1,\dots,X_k]$, where $\f$ is any field, define
\[
V_{\f^k}(f)=\{(x_1,\dots,x_k)\in\f^k: f(x_1,\dots,x_k)=0\}.
\]

\begin{thm}\label{T3.2}
Let $l>0$ and $u_1,\dots,u_l\in\f_{2^n}$ be a basis of a non zero-sum subspace of $\f_2$. If
\begin{equation}\label{3.9}
n\ge 4l+7,
\end{equation}
then $u_1,\dots,u_l$ can be extended to a basis of an $(l+2)$-dimensional zero-sum subspace of $\f_{2^n}$.
\end{thm}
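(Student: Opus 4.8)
The plan is to produce $u_{l+1}, u_{l+2} \in \f_{2^n}$ such that $u_1,\dots,u_{l+2}$ span an $(l+2)$-dimensional zero-sum subspace. By Theorem~\ref{T2.4}, this amounts to finding $(x_1,x_2)\in\f_{2^n}^2$ with $F_{2,l}(x_1,x_2)=0$ such that $x_1,x_2,u_1,\dots,u_l$ are linearly independent over $\f_2$. The natural strategy is to count $\f_{2^n}$-points on the hypersurface $V_{\f_{2^n}^2}(F_{2,l})$ via the Lang--Weil (or, since we have a plane curve, the Hasse--Weil) bound and show this count exceeds the number of ``bad'' points, namely those $(x_1,x_2)$ for which $x_1,x_2,u_1,\dots,u_l$ fail to be linearly independent, together with any points lying on components that would force the resulting subspace to collapse.

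First I would invoke Lemma~\ref{L3.1} with $k=2$: since $l>0$ and $u_1,\dots,u_l$ is a basis of a non zero-sum subspace, $F_{2,l}(X_1,X_2)$ is absolutely irreducible. Its total degree is $d := \deg F_{2,l} = 2^{l+2} - 2^l = 3\cdot 2^l$ by the degree formula recorded before Lemma~\ref{L1}. Viewing $V_{\f_{2^n}^2}(F_{2,l})$ as (the affine part of) an absolutely irreducible plane curve of degree $d$ over $\f_{2^n}$, the Hasse--Weil bound gives
\[
\bigl|\,V_{\f_{2^n}^2}(F_{2,l})\,\bigr| \ \ge\ 2^n + 1 - (d-1)(d-2)\sqrt{2^n} - d,
\]
where the additive $-d$ absorbs the points at infinity. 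So the number of affine zeros is at least roughly $2^n - 9\cdot 2^{2l}\cdot 2^{n/2}$.

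Next I would bound the set of bad points. The condition that $x_1,x_2,u_1,\dots,u_l$ are linearly \emph{dependent} means $(x_1,x_2)$ lies on one of finitely many affine-linear subvarieties of $\f_{2^n}^2$, each cut out by an equation of the form $a_1 x_1 + a_2 x_2 = c$ with $(a_1,a_2)\ne(0,0)$ and $c$ in the $\f_2$-span of $u_1,\dots,u_l$; there are $O(2^{l+1})$ such lines and each carries at most $2^n$ points, but intersecting with the curve $V(F_{2,l})$ each such line contributes at most $d$ points unless the line is a component of the curve — and absolute irreducibility of $F_{2,l}$ with $d>1$ rules that out. Hence the bad points number at most $c\cdot 2^{l} \cdot d = O(2^{3l})$, a constant (in $n$) bound. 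Comparing, the Hasse--Weil lower bound $2^n - 9\cdot 2^{2l}\cdot 2^{n/2} - 3\cdot 2^l$ strictly exceeds $O(2^{3l})$ precisely when $2^{n/2}$ dominates $2^{2l}$ by a sufficient margin, i.e. when $n/2 > 2l + \text{const}$; the hypothesis $n \ge 4l+7$ is exactly calibrated to make $2^n > 9\cdot 2^{2l}\cdot 2^{n/2} + (\text{bad points})$, so a zero of $F_{2,l}$ with $x_1,x_2,u_1,\dots,u_l$ linearly independent exists. Setting $u_{l+1}=x_1$, $u_{l+2}=x_2$ finishes the proof, since $F_{l+2}(u_1,\dots,u_{l+2})=F_{2,l}(u_{l+1},u_{l+2})=0$ identifies this as a zero-sum subspace by Theorem~\ref{T2.4}.

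The main obstacle I anticipate is getting the bad-point count sharp enough and handling degenerate cases: one must rule out that the curve $V(F_{2,l})$ is entirely contained in the union of the ``dependence'' hyperplanes (impossible by irreducibility and degree, but needs a clean argument), and one must be careful that a zero $(x_1,x_2)$ with $x_1,x_2$ each nonzero and $\f_2$-independent of the $u_i$ genuinely yields a fresh basis — in particular that $x_1,x_2$ are themselves $\f_2$-independent and independent from $\mathrm{span}(u_1,\dots,u_l)$ jointly, which is what the linear-dependence locus encodes. A secondary technical point is the precise form of the Hasse--Weil bound for a possibly singular plane curve and the bookkeeping of points at infinity; using the crude bound $|V| \ge 2^n + 1 - (d-1)(d-2)2^{n/2} - d$ sidesteps a desingularization discussion at the cost of the slightly weaker constant, which the hypothesis $n\ge 4l+7$ comfortably accommodates. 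I expect the inequality $2^n \ge 4l+7$-version to be verified by a short direct estimate: $9\cdot 2^{2l + n/2} \le 9 \cdot 2^{2l + n/2}$ and $n \ge 4l+7$ gives $n/2 \ge 2l + 7/2$, so $2l + n/2 \le n - 7/2$, whence $9\cdot 2^{n-7/2} < 2^{n-1}$, leaving the other half of $2^n$ to dominate the $O(2^{3l})$ bad points, again using $3l \le (3/4)(n-7) < n-1$.
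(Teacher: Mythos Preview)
Your approach is the same as the paper's: invoke Lemma~\ref{L3.1} for absolute irreducibility of $F_{2,l}$, apply Hasse--Weil, and subtract the ``bad'' points where $x_1,x_2,u_1,\dots,u_l$ are $\f_2$-dependent. The structure is correct, and your line-by-line bad-point count is essentially B\'ezout in disguise, since the dependence locus is exactly $V_{\f_q^2}(\Delta(X_1,X_2,u_1,\dots,u_l))$ and $\Delta$ factors into $3\cdot 2^l$ linear forms in $X_1,X_2$; this gives at most $(3\cdot 2^l)\cdot d=9\cdot 2^{2l}$ bad points (you wrote $O(2^{3l})$, which is an overcount).

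The genuine issue is that your final numerics do not establish the stated threshold $n\ge 4l+7$. Your claim $9\cdot 2^{n-7/2}<2^{n-1}$ is false, since $9>2^{5/2}\approx 5.66$; splitting $2^n$ in half and absorbing the Hasse--Weil error into one half costs exactly the margin you need. The paper avoids this by tighter bookkeeping: it manipulates the Hasse--Weil lower bound so as to extract a $+\,9\cdot 2^{2l}$ term, which then cancels the B\'ezout bound $9\cdot 2^{2l}$ on the bad points exactly, leaving the clean requirement $q^{1/2}>9\cdot 2^{2l}$, i.e.\ $n>4l+2\log_2 9\approx 4l+6.34$. This is precisely what $n\ge 4l+7$ delivers. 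Your argument as written would only yield something like $n\ge 4l+8$; to match the paper you must either track the constants as it does or tighten your bad-point estimate to the exact $9\cdot 2^{2l}$ and offset it against the lower-order terms in Hasse--Weil.
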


\begin{proof}
Let $q=2^n$. By Theorem~\ref{T2.4}, it suffices to show that there exists $(x_1,x_2)\in\f_q^2$ such that $F_{2,l}(x_1,x_2)=F_{2+l}(x_1,x_2,u_1,\dots,u_l)=0$ but $\Delta(x_1,x_2,u_1,\dots,u_l)\ne 0$, that is,
\begin{equation}\label{3.10}
\bigl|V_{\f_q^2}(F_{2,l})\setminus V_{\f_q^2}(\Delta(X_1,X_2,u_1,\dots,u_l))\bigr|>0.
\end{equation}
By the Hasse-Weil bound (as stated in \cite[Corollary~2.5]{Aubry-Perret-1993}), we have
\begin{align*}
\bigl|V_{\f_q^2}(F_{2,l})\bigr|\,&\ge q+1-(2^{2+l}-2^l-1)(2^{2+l}-2^l-2)q^{1/2}-2^{1+l}\cr
&=q-(3\cdot 2^l-1)(3\cdot 2^l-2)q^{1/2}+1-2^{1+l}\cr
&>q-(9\cdot 2^{2l}-9\cdot 2^l+2)q^{1/2}-q^{1/2}\cr
&=q-9\cdot 2^{2l}q^{1/2}+(9\cdot 2^l-3)q^{1/2}\cr
&>q-9\cdot 2^{2l}q^{1/2}+9\cdot 2^{2l}.
\end{align*}
(In the first line of the above estimation, $2^{1+l}$ is an upper bound for the number of points at infinity on the curve $F_{2,l}(X_1,X_2)=0$.) On the other hand, by Bezout's theorem \cite[\S5.3]{Fulton-1989}, 
\[
\bigl|V_{\f_q^2}(F_{2,l})\cap V_{\f_q^2}(\Delta(X_1,X_2,u_1,\dots,u_l))\bigr|\le(2^{2+l}-2^l)^2=9\cdot 2^{2l}.
\]
Therefore,
\[
\bigl|V_{\f_q^2}(F_{2,l})\setminus V_{\f_q^2}(\Delta(X_1,X_2,u_1,\dots,u_l))\bigr|>q-9\cdot 2^{2l}q^{1/2}=q^{1/2}(q^{1/2}-9\cdot 2^{2l}).
\]
From \eqref{3.9}, we have
\[
n\ge 4l+7>4l+2\log_29 \;(\approx 4l+6.34).
\]
Thus 
\[
q^{1/2}=2^{n/2}>9\cdot 2^{2l},
\]
which proves \eqref{3.10}.
\end{proof}

\begin{proof}[Proof of Theorem~\ref{T1.2}]
Let $l$ be the minimal prime divisor of $n$. By Theorem~\ref{T2.2}, we may assume $l\ge 7$, whence $n\ge 49$. In this case,
\[
n-4l-6\ge n-4\sqrt n-6>0,
\]
whence $n\ge 4l+7$. 

For each $3\le r\le l+2$, write $r=2+l'$, where $1\le l'\le l$. Let $u_1,\dots,u_{l'-1}\in\f_{2^l}$ be linearly independent over $\f_2$, and exetnd $u_1,\dots,u_{l'-1}$ to a basis $u_1,\dots,u_{l'}$ of an $l'$-dimensional non zero-sum subspace of $\f_{2^n}$ (by Lemma~\ref{L2}). By Theorem~\ref{T3.2}, $u_1,\dots,u_{l'}$ can be extended to a basis of an $r$-dimensional zero-sum subspace $F$ of $\f_{2^n}$. Note that $s:=\dim_{\f_{2^l}}\f_{2^l}F\le 4$. Thus by Theorem~\ref{T2.3},
\[
\Bigl\{tl+r:0\le t\le \frac nl-4 \Bigr\}\subset\mathcal K_n.
\]
Since 
\[
\Bigl(\frac nl-4\Bigr)l+r=n-4l+r\ge n-4\sqrt n+3\ge\frac 12(n-1),
\]
(here we used the fact that $n\ge 49$), we have
\[
\mathcal K_n\supset\Bigl\{tl+r: 3\le r\le l+2,\ 0\le t\le \frac nl-4 \Bigr\}\supset\Bigl\{3,4,\dots,\frac{n-1}2\Bigr\}.
\]
By Theorem~\ref{T2.1}, we conclude that
\[
\mathcal K_n\supset\{3,4,\dots,n-3\}.
\]
\end{proof}



\end{document}